\documentclass[a4paper,12pt]{article}
\language255
\usepackage[a4paper,margin=2.5truecm]{geometry}
\usepackage{amsmath,amsfonts,amssymb,amsthm,txfonts,hyperref,url}
\newtheorem{lemma}{Lemma}
\newtheorem{corollary}{Corollary}
\newtheorem{theorem}{Theorem}
\newcommand{\N}{\mathbb N}
\newcommand{\Z}{\mathbb Z}
\title{Is there an algorithm that decides the solvability
of a Diophantine equation with a finite number of solutions?}
\author{Apoloniusz Tyszka}
\begin{document}
\begin{sloppypar}
\date{}
\maketitle
\begin{abstract}
For a positive integer $n$, let \mbox{$\theta(n)$} denote the smallest
 positive integer $b$
such that for each system \mbox{$S \subseteq \{x_i \cdot x_j=x_k,~x_i+1=x_k:~i,j,k \in \{1,\ldots,n\}\}$}
which has
 a solution in positive integers \mbox{$x_1,\ldots,x_n$} and which has only finitely many solutions
in positive integers \mbox{$x_1,\ldots,x_n$}, 
there exists a solution of $S$ in \mbox{$([1,b] \cap \N)^n$}.
We conjecture that there exists an integer \mbox{$\delta \geqslant 9$} such that the inequality
\mbox{$\theta(n) \leqslant \left(2^{\textstyle 2^{n-5}}-1\right)^{\textstyle 2^{n-5}}+1$}
holds for every integer \mbox{$n \geqslant \delta$}. We prove:
{\tt (1)} for every integer \mbox{$n>9$}, the inequality
\mbox{$\theta(n)<\left(2^{\textstyle 2^{n-5}}-1\right)^{\textstyle 2^{n-5}}+1$} implies that
\mbox{$2^{\textstyle 2^{n-5}}+1$} is composite,
{\tt (2)}~the conjecture implies that there exists
an algorithm which takes as input a Diophantine equation \mbox{$D(x_1,\ldots,x_p)=0$} and returns
the message {\tt "Yes"} or {\tt "No"} which correctly determines the solvability of the equation \mbox{$D(x_1,\ldots,x_p)=0$}
in positive integers, if the solution set is finite,
{\tt (3)} if a function \mbox{$f \colon \N \setminus \{0\} \to \N \setminus \{0\}$} has a \mbox{finite-fold}
Diophantine representation, then there exists a positive integer $m$ such that \mbox{$f(n)<\theta(n)$}
for every integer \mbox{$n>m$}.
\end{abstract}
\vskip 0.01truecm
\noindent
{\bf Key words and phrases:} algorithmic decidability, Diophantine equation with a finite number of solutions,
Fermat prime, \mbox{finite-fold} Diophantine representation, smallest solution of a Diophantine equation.
\vskip 0.2truecm
\noindent
{\bf 2010 Mathematics Subject Classification:} 11U05.
\vskip 1.0truecm
\par
In this article, we propose a conjecture which implies that there exists an algorithm which takes as input
a Diophantine equation \mbox{$D(x_1,\ldots,x_p)=0$} and returns the message {\tt "Yes"} or {\tt "No"} which
correctly determines the solvability of the equation \mbox{$D(x_1,\ldots,x_p)=0$} in positive integers,
if the solution set is finite. Let
\[
E_n=\{x_i \cdot x_j=x_k,~x_i+1=x_k:~i,j,k \in \{1,\ldots,n\}\}
\]
For a positive integer $n$, let \mbox{$\theta(n)$} denote the smallest
 positive integer $b$
such that for each system \mbox{$S \subseteq E_n$} which has
 a solution in positive integers
\mbox{$x_1,\ldots,x_n$} and which has only finitely many solutions in positive integers
\mbox{$x_1,\ldots,x_n$}, there exists a solution of $S$ in \mbox{$([1,b] \cap \N)^n$}.
We do not know whether or not there exists a computable function
\mbox{$\xi \colon \N \setminus \{0\} \to \N \setminus \{0\}$} which is greater
than the function \mbox{$\theta \colon \N \setminus \{0\} \to \N \setminus \{0\}$}.
\begin{theorem}\label{the1}
We have: \mbox{$\theta(1)=1$} and \mbox{$\theta(2)=2$}.
The inequality \mbox{$\theta(n) \geqslant 2^{\textstyle 2^{n-2}}$}
holds for every integer \mbox{$n \geqslant 3$}.
\end{theorem}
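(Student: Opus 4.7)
The plan is to handle the three assertions separately, treating the small cases by direct enumeration and the general case by exhibiting an explicit finite-solvable system with an enormous unique solution.

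For $n=1$ the set $E_{1}$ consists of just two equations, so $E_{1}$ has only four subsets. I would discard $\emptyset$ (infinitely many solutions) and any system containing $x_{1}+1=x_{1}$ (unsolvable), leaving only $\{x_{1}\cdot x_{1}=x_{1}\}$, whose unique solution is $x_{1}=1$. Hence $\theta(1)=1$. For $n=2$ the equation set $E_{2}$ is still finite and small, so only finitely many systems need to be considered. I would organise the case analysis according to whether $S$ contains an equation of the form $x_{i}\cdot x_{i}=x_{i}$ or $x_{i}\cdot x_{j}=x_{i}$ (each of which pins a variable to $1$), and check that once one variable is forced to $1$, the only other values compatible with any remaining equations lie in $\{1,2\}$, while any system that pins neither variable is either unsolvable or has infinitely many solutions. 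To obtain the matching lower bound $\theta(2)\geqslant 2$ I would exhibit the system $\{x_{1}\cdot x_{1}=x_{1},\ x_{1}+1=x_{2}\}$, whose only solution is $(1,2)$.

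For $n\geqslant 3$ the plan is to construct the explicit system
\[
S_{n}=\{x_{1}\cdot x_{1}=x_{1},\ x_{1}+1=x_{2},\ x_{2}\cdot x_{2}=x_{3},\ x_{3}\cdot x_{3}=x_{4},\ \ldots,\ x_{n-1}\cdot x_{n-1}=x_{n}\}\subseteq E_{n}.
\]
The first equation forces $x_{1}=1$; the second yields $x_{2}=2$; and a routine induction gives $x_{k}=2^{\,2^{k-2}}$ for every $k\in\{2,\ldots,n\}$, since $x_{k+1}=x_{k}^{2}=2^{\,2\cdot 2^{k-2}}=2^{\,2^{k-1}}$. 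Thus $S_{n}$ has a unique solution in positive integers, and the largest coordinate of that solution equals $2^{\,2^{n-2}}$. Any $b$ witnessing the definition of $\theta(n)$ must therefore satisfy $b\geqslant 2^{\,2^{n-2}}$, proving the claimed inequality.

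The main point requiring care is the $n=2$ case, which, while elementary, involves a systematic bookkeeping over the subsets of $E_{2}$ to rule out finite-solvable systems whose solutions escape $\{1,2\}^{2}$. The $n=1$ case is immediate, and the lower bound for $n\geqslant 3$ reduces entirely to verifying the tower-of-squarings computation above.
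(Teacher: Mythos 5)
Your proposal is correct and follows essentially the same route as the paper: the lower bounds for $\theta(2)$ and for $\theta(n)$ with $n\geqslant 3$ are witnessed by exactly the same systems (the chain $x_1\cdot x_1=x_1$, $x_1+1=x_2$, $x_i\cdot x_i=x_{i+1}$ with unique solution ending in $2^{\textstyle 2^{n-2}}$). You are in fact somewhat more careful than the paper, which only exhibits the lower-bound witnesses and leaves the matching upper bounds $\theta(1)\leqslant 1$ and $\theta(2)\leqslant 2$ implicit, whereas you at least sketch the finite case analysis over the subsets of $E_1$ and $E_2$ needed to justify them.
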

\begin{proof}
Only \mbox{$x_1=1$} solves the equation \mbox{$x_1 \cdot x_1=x_1$} in positive integers.
Only \mbox{$x_1=1$} and \mbox{$x_2=2$} solve the system \mbox{$\{x_1 \cdot x_1=x_1,~x_1+1=x_2\}$}
in positive integers. For each integer \mbox{$n \geqslant 3$}, the following system
\begin{displaymath}
\left\{
\begin{array}{rcl}
x_1 \cdot x_1 &=& x_1 \\
x_1+1 &=& x_2 \\
\forall i \in \{2,\ldots,n-1\} ~x_i \cdot x_i &=& x_{i+1}
\end{array}
\right.
\end{displaymath}
\noindent
has a unique solution in positive integers,
namely \mbox{$\left(1,2,4,16,256,\ldots,2^{\textstyle 2^{n-3}},2^{\textstyle 2^{n-2}}\right)$}.
\end{proof}
\begin{theorem}\label{the2}
For every positive integer $n$, \mbox{$\theta(n+1) \geqslant \theta(n) \cdot \theta(n)$}.
\end{theorem}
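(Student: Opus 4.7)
The plan is to construct a witnessing system for $\theta(n+1)\geqslant\theta(n)^2$ by extending a witness for $\theta(n)$ by one new variable together with a single squaring equation.

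By the definition of $\theta(n)$, I would first choose a system $S\subseteq E_n$ that has at least one but only finitely many positive integer solutions, such that no positive integer solution of $S$ has all coordinates in $[1,\theta(n)-1]$, while some positive integer solution $(a_1,\ldots,a_n)$ does attain $\max_i a_i=\theta(n)$. Since the indices of $E_n$ may be permuted freely, I may relabel so that $a_n=\theta(n)$.

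I would then set $S'=S\cup\{x_n\cdot x_n=x_{n+1}\}\subseteq E_{n+1}$. The tuple $(a_1,\ldots,a_n,\theta(n)^2)$ is a positive integer solution of $S'$, and $S'$ has only finitely many positive integer solutions because $x_{n+1}$ is determined by $x_n$ in each. Hence $S'$ is admissible in the definition of $\theta(n+1)$.

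The crucial and hardest step is to verify that every positive integer solution $(y_1,\ldots,y_{n+1})$ of $S'$ satisfies $\max_i y_i\geqslant\theta(n)^2$. Projecting onto the first $n$ coordinates gives a solution of $S$, so $\max_{i\leqslant n}y_i\geqslant\theta(n)$; and $y_{n+1}=y_n^2\geqslant\theta(n)^2$ whenever $y_n\geqslant\theta(n)$. To guarantee $y_n\geqslant\theta(n)$ in every positive integer solution of $S$, I would further refine $S$ by adjoining extra $E_n$-equations satisfied by $(a_1,\ldots,a_n)$ that eliminate every solution of the original $S$ whose $n$-th coordinate lies strictly below $\theta(n)$; this amounts to arranging the witness so that the maximum coordinate is attained at position $n$ in every positive integer solution. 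Once this is done, $y_{n+1}=y_n^2\geqslant\theta(n)^2$ in every positive integer solution of the augmented system $S'$, and the desired inequality $\theta(n+1)\geqslant\theta(n)^2$ follows.
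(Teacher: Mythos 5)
Your construction is the same one the paper uses: start from a witness system $S\subseteq E_n$ for $\theta(n)$ and adjoin a single squaring equation $x_k\cdot x_k=x_{n+1}$, observing that this preserves solvability and finiteness of the solution set. That observation (for every $k$, finiteness of the solution set of $S$ implies finiteness for $S\cup\{x_k\cdot x_k=x_{n+1}\}$) is in fact the entire content of the paper's own one-sentence proof, so up to that point you match it exactly, and you deserve credit for making explicit what the paper leaves implicit: the real issue is to guarantee that \emph{every} positive integer solution of the augmented system has a coordinate $\geqslant\theta(n)^2$, which requires that the coordinate being squared be $\geqslant\theta(n)$ in every solution, not just in the distinguished one. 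The coordinate at which the maximum is attained can a priori vary from solution to solution, so squaring a fixed coordinate $x_n$ need not produce a large value in every solution.

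The gap is in your proposed repair of this step. You say you would ``adjoin extra $E_n$-equations satisfied by $(a_1,\ldots,a_n)$ that eliminate every solution of the original $S$ whose $n$-th coordinate lies strictly below $\theta(n)$,'' but you give no reason such equations exist. The set of equations of $E_n$ satisfied by $(a_1,\ldots,a_n)$ is a fixed finite set; adjoining all of them yields the maximal system $S_{\max}=\{e\in E_n:\ (a_1,\ldots,a_n)\ \mathrm{satisfies}\ e\}$, whose solution set still need not exclude a tuple $\tau$ with $\tau_n<\theta(n)$ and $\max_i\tau_i<\theta(n)^2$ (each solution of $S_{\max}$ is guaranteed only to have \emph{some} coordinate $\geqslant\theta(n)$). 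For instance, nothing in your argument rules out a witness system two of whose solutions attain their maxima at different coordinates, with the complementary coordinates small; in that situation no single choice of $k$ works and no further $E_n$-equations are available to discard the offending solution. So the ``crucial and hardest step'' you correctly isolate is asserted rather than proved, and the argument as written does not establish $\theta(n+1)\geqslant\theta(n)\cdot\theta(n)$. (To be fair to you, the paper's own proof verifies only the finiteness condition and is silent on exactly this selection problem.)
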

\begin{proof}
For every \mbox{$k \in \{1,\ldots,n\}$}, if a system \mbox{$S \subseteq E_n$} has
only finitely many solutions in positive integers \mbox{$x_1,\ldots,x_n$}, then the
system \mbox{$S \cup \{x_k \cdot x_k=x_{n+1}\}$} has only finitely many solutions
in positive integers \mbox{$x_1,\ldots,x_{n+1}$}.
\end{proof}
\begin{corollary}\label{cor1}
\mbox{$\theta(1)=1<2=\theta(2)<\theta(3)<\theta(4)<\ldots$}
\end{corollary}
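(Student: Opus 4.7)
The plan is to deduce the chain of strict inequalities by combining the two preceding results, using Theorem~\ref{the1} for the base cases and small values, and Theorem~\ref{the2} to propagate strict increase from a single ``bootstrap'' bound.

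First I would dispose of the initial segment directly from Theorem~\ref{the1}: the equalities $\theta(1)=1$ and $\theta(2)=2$ are part of that theorem's conclusion, and they immediately give $\theta(1)<\theta(2)$.

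Next I would observe the elementary fact that if $a$ is an integer with $a\geqslant 2$, then $a^2>a$. Combined with Theorem~\ref{the2}, which asserts $\theta(n+1)\geqslant \theta(n)\cdot\theta(n)$, this gives $\theta(n+1)>\theta(n)$ whenever $\theta(n)\geqslant 2$. So it suffices to check that $\theta(n)\geqslant 2$ for every integer $n\geqslant 2$. For $n=2$ this is the equality $\theta(2)=2$ from Theorem~\ref{the1}, and for $n\geqslant 3$ it follows from the inequality $\theta(n)\geqslant 2^{\textstyle 2^{n-2}}\geqslant 2^{\textstyle 2}=4$ also supplied by Theorem~\ref{the1}. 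Thus the strict chain $\theta(2)<\theta(3)<\theta(4)<\ldots$ is obtained, and concatenating with $\theta(1)<\theta(2)$ gives the corollary.

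There is no real obstacle here; the statement is a direct consequence of the two theorems, and the only ingredient beyond quoting them is the trivial remark that squaring strictly increases any integer that is at least $2$.
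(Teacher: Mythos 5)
Your proof is correct and matches the paper's intent: the corollary is stated there without proof precisely because it follows from Theorem~\ref{the1} (for $\theta(1)=1<2=\theta(2)$ and the lower bound $\theta(n)\geqslant 2^{\textstyle 2^{n-2}}\geqslant 2$ for $n\geqslant 3$) together with Theorem~\ref{the2} (since $\theta(n)\geqslant 2$ forces $\theta(n+1)\geqslant\theta(n)^2>\theta(n)$), exactly as you argue.
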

\par
Primes of the form \mbox{$2^{\textstyle 2^n}+1$} are called Fermat primes,
as Fermat conjectured that every integer of the form \mbox{$2^{\textstyle 2^n}+1$} is prime (\mbox{\cite[p.~1]{17lectures}}).
Fermat correctly remarked that \mbox{$2^{\textstyle 2^0}+1=3$}, \mbox{$2^{\textstyle 2^1}+1=5$},
\mbox{$2^{\textstyle 2^2}+1=17$}, \mbox{$2^{\textstyle 2^3}+1=257$}, and \mbox{$2^{\textstyle 2^4}+1=65537$}
are all prime (\mbox{\cite[p.~1]{17lectures}}).
\vskip 0.2truecm
\noindent
{\bf Open Problem.} {\em Are there infinitely many composite numbers of the
form \mbox{$2^{\textstyle 2^n}+1$}? (\mbox{\cite[p.~159]{17lectures}})}
\vskip 0.2truecm
\noindent
Most mathematicians believe that \mbox{$2^{\textstyle 2^n}+1$} is composite for every integer \mbox{$n \geqslant 5$}.
\begin{theorem}\label{the3}
If $n \in {\mathbb N}\setminus\{0\}$ and $2^{\textstyle 2^n}+1$ is prime,
then the following system
\[
\left\{\begin{array}{rcl}
\forall i \in \{1,\ldots,n\} ~x_i \cdot x_i &=& x_{i+1} \\
x_{1}+1 &=& x_{n+2} \\
x_{n+2}+1 &=& x_{n+3} \\
x_{n+1}+1 &=& x_{n+4} \\
x_{n+3} \cdot x_{n+5} &=& x_{n+4}
\end{array}\right.
\]
has a unique solution \mbox{$\left(a_1,\ldots,a_{n+5}\right)$} in \mbox{non-negative} integers.
The numbers \mbox{$a_1,\ldots,a_{n+5}$} are positive and
${\rm max}\left(a_1,\ldots,a_{n+5}\right)=a_{n+4}=\left(2^{\textstyle 2^n}-1\right)^{\textstyle 2^n}+1$.
\end{theorem}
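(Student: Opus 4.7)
The plan is to reduce the system to a single divisibility condition on $x_1$ and then use the primality of $2^{2^n}+1$ to force a unique value for $x_1$. Every other variable is then determined, and one checks positivity and the maximum by inspection.

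First I would use the $n$ equations $x_i\cdot x_i = x_{i+1}$ to get, by induction on $i\in\{0,\ldots,n\}$, that $x_{i+1}=x_1^{2^i}$. In particular $x_{n+1}=x_1^{2^n}$. The next three equations then force
\[
x_{n+2}=x_1+1,\qquad x_{n+3}=x_1+2,\qquad x_{n+4}=x_1^{2^n}+1.
\]
Substituting into the last equation $x_{n+3}\cdot x_{n+5}=x_{n+4}$, the existence of a non-negative integer $x_{n+5}$ is equivalent to the divisibility $(x_1+2)\mid(x_1^{2^n}+1)$.

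The key step is to evaluate this divisibility modulo $x_1+2$. Since $x_1\equiv -2\pmod{x_1+2}$, and since $2^n$ is even for $n\geqslant 1$, we obtain
\[
x_1^{2^n}+1\;\equiv\;(-2)^{2^n}+1\;=\;2^{2^n}+1\pmod{x_1+2},
\]
so the divisibility reduces to $(x_1+2)\mid(2^{2^n}+1)$. This is the only point where the hypothesis enters: if $2^{2^n}+1$ is prime, then $x_1+2\in\{1,\,2^{2^n}+1\}$, and since $x_1\geqslant 0$ forces $x_1+2\geqslant 2$, we must have $x_1=2^{2^n}-1$. Conversely, this value does satisfy the divisibility, yielding a genuine solution.

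With $x_1$ determined, every other coordinate is fixed: $x_{i+1}=(2^{2^n}-1)^{2^i}$ for $i\leqslant n$, while $x_{n+2}=2^{2^n}$, $x_{n+3}=2^{2^n}+1$, $x_{n+4}=(2^{2^n}-1)^{2^n}+1$, and $x_{n+5}=x_{n+4}/(2^{2^n}+1)$. All of these are manifestly positive integers (the last because $2^{2^n}+1$ divides $x_{n+4}$ by the computation above, and the quotient is at least $1$). Finally, for the maximum, note that $x_{n+4}=x_{n+1}+1>x_{n+1}$ dominates all earlier $x_i$'s since those form a strictly increasing chain of squarings starting from $x_1\geqslant 3$; it also exceeds $x_{n+2},x_{n+3}$ trivially and $x_{n+5}=x_{n+4}/(2^{2^n}+1)<x_{n+4}$. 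I do not expect a genuine obstacle; the only moment requiring any thought is the modular computation above and the verification that $x_1=0$ (the one edge case where the inductive formula $x_{i+1}=x_1^{2^i}$ becomes delicate) is excluded, which it is because $x_1=0$ would force $x_{n+3}=2$ and $x_{n+4}=1$, making $2x_{n+5}=1$ unsolvable.
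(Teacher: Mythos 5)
Your proposal is correct and follows essentially the same route as the paper: reduce the system to the divisibility $(x_1+2)\mid(x_1^{2^n}+1)$, identify this with $(x_1+2)\mid(2^{2^n}+1)$, and invoke primality; the paper establishes the key congruence by an explicit binomial expansion of $((x_1+2)-2)^{2^n}+1$, whereas you obtain it more cleanly from $x_1\equiv-2\pmod{x_1+2}$, but the mathematical content is identical. Your handling of the edge case $x_1=0$ and of the maximum is also sound (and in fact $x_1=0$ is already excluded by the divisibility argument, since $2\nmid 2^{2^n}+1$).
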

\begin{proof}
The system equivalently expresses that \mbox{$x_1^{\textstyle 2^n}+1=(x_1+2) \cdot x_{n+5}$}. Therefore,
\[
x_1^{\textstyle 2^n}+1=\left((x_1+2)-2\right)^{\textstyle 2^n}+1=
\]
\[
2^{\textstyle 2^n}+1+(x_1+2) \cdot \sum_{\textstyle k=1}^{\textstyle 2^n}
{\textstyle 2^n \choose k} \cdot (x_1+2)^{\textstyle k-1} \cdot (-2)^{\textstyle 2^n-k}=(x_1+2) \cdot x_{n+5}
\]
Hence,
\[
2^{\textstyle 2^n}+1=
(x_1+2) \cdot \left(x_{n+5}-\sum_{\textstyle k=1}^{\textstyle 2^n} {{\textstyle 2^n} \choose {\textstyle k}}
\cdot (x_1+2)^{\textstyle k-1} \cdot (-2)^{\textstyle 2^n-k}\right)
\]
Therefore, \mbox{$x_1+2$} divides \mbox{$2^{\textstyle 2^n}+1$}.
Since \mbox{$x_1+2 \geqslant 2$} and \mbox{$2^{\textstyle 2^n}+1$} is prime, we get
\mbox{$x_1=2^{\textstyle 2^n}-1$}. Hence, \mbox{$x_{n+2}=2^{\textstyle 2^n}$}
and \mbox{$x_{n+3}=2^{\textstyle 2^n}+1$}. Next,
\mbox{$x_{n+1}=x_1^{\textstyle 2^n}=\left(2^{\textstyle 2^n}-1\right)^{\textstyle 2^n}$} and
\[
x_{n+4}=x_{n+1}+1=\left(2^{\textstyle 2^n}-1\right)^{\textstyle 2^n}+1
\]
The following positive integers
\[
\begin{array}{rcl}
\forall i \in \{1,\ldots,n+1\} ~a_i &=& \left(2^{\textstyle 2^n}-1\right)^{\textstyle 2^{i-1}} \\
a_{n+2} &=& 2^{\textstyle 2^n} \\
a_{n+3} &=& 2^{\textstyle 2^n}+1 \\
a_{n+4} &=& \left(2^{\textstyle 2^n}-1\right)^{\textstyle 2^n}+1 \\
a_{n+5} &=& 1+\sum_{\textstyle k=1}^{\textstyle 2^{n}}\limits
\displaystyle {{\textstyle 2^n} \choose {\textstyle k}} \cdot \left(2^{\textstyle 2^n}+1\right)^{\textstyle k-1}
\cdot (-2)^{\textstyle 2^n-k}
\end{array}
\]
give the solution which is unique in \mbox{non-negative} integers. The number $a_{n+5}$ is positive because
\[
a_{n+5}=\frac{a_{n+4}}{a_{n+3}}=\frac{\left(2^{\textstyle 2^n}-1\right)^{\textstyle 2^n}+1}{2^{\textstyle 2^n}+1}
\]
\end{proof}
\begin{corollary}\label{cor2}
For every integer \mbox{$n>5$}, if \mbox{$2^{\textstyle 2^{n-5}}+1$} is prime, then
\[
\theta(n) \geqslant \left(2^{\textstyle 2^{n-5}}-1\right)^{\textstyle 2^{n-5}}+1
\]
In particular, 
\[
\theta(9) \geqslant \left(2^{\textstyle 2^{9-5}}-1\right)^{\textstyle 2^{9-5}}+1=\left(2^{16}-1\right)^{16}+1>
\left(2^{16}-2^{15}\right)^{16}=\left(2^{15}\right)^{16}=2^{240}>2^{\textstyle 2^{9-2}}
\]
The numbers \mbox{$2^{\textstyle 2^{n-5}}+1$} are prime when \mbox{$n \in \{6,7,8\}$}, but
\[
\left(2^{\textstyle 2^{6-5}}-1\right)^{\textstyle 2^{6-5}}+1=10<65536=2^{\textstyle 2^{6-2}}
\]
\[
\left(2^{\textstyle 2^{7-5}}-1\right)^{\textstyle 2^{7-5}}+1=50626<4294967296=2^{\textstyle 2^{7-2}}
\]
\[
\left(2^{\textstyle 2^{8-5}}-1\right)^{\textstyle 2^{8-5}}+1=17878103347812890626<18446744073709551616=2^{\textstyle 2^{8-2}}
\]
\end{corollary}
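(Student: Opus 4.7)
The plan is to deduce the inequality directly from Theorem~\ref{the3} by choosing the parameter there to match the number of variables available in $E_n$. For a given \mbox{$n>5$}, I would set \mbox{$m:=n-5 \geqslant 1$} and invoke Theorem~\ref{the3} with this $m$, under the assumed primality of \mbox{$2^{\textstyle 2^{m}}+1$}. A quick inspection of the equations listed in Theorem~\ref{the3} shows that they use \mbox{$m+5=n$} variables and only atomic equations of the forms \mbox{$x_i \cdot x_j=x_k$} and \mbox{$x_i+1=x_k$}, so the system constitutes a subset $S$ of $E_n$ to which the definition of $\theta(n)$ applies.

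Next I would read off from Theorem~\ref{the3} that $S$ admits a unique positive integer solution (hence only finitely many), whose largest coordinate equals \mbox{$\left(2^{\textstyle 2^{n-5}}-1\right)^{\textstyle 2^{n-5}}+1$}. The defining property of $\theta(n)$ then guarantees a solution of $S$ in \mbox{$([1,\theta(n)] \cap \N)^n$}; uniqueness forces that solution to be the one produced by Theorem~\ref{the3}, so $\theta(n)$ must be at least its largest coordinate, which is exactly the asserted inequality.

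For the concluding numerical remarks, the comparison for $n=9$ would follow from the elementary bound \mbox{$2^{16}-1 \geqslant 2^{15}$}, which yields \mbox{$\left(2^{16}-1\right)^{16} \geqslant 2^{240}$} dominating \mbox{$2^{128}=2^{\textstyle 2^{9-2}}$}. For \mbox{$n \in \{6,7,8\}$} the primality hypothesis reduces to the classical Fermat primes $5$, $17$, $257$, and the three displayed inequalities are direct arithmetic checks of the sort one verifies by hand or by machine; they exhibit the range in which Corollary~\ref{cor2} is in fact weaker than Theorem~\ref{the1}.

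There is no real obstacle here: once Theorem~\ref{the3} is available, the corollary is essentially a dictionary entry translating its content into the language of $\theta(n)$. The only item demanding even brief attention is the bookkeeping that the system of Theorem~\ref{the3} really lives in $E_n$ with the correct variable count under the substitution \mbox{$m=n-5$}, which is immediate by counting.
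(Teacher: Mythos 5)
Your proposal is correct and matches the paper's (implicit) argument: the corollary is obtained exactly by instantiating Theorem~\ref{the3} with parameter $n-5$, observing that the resulting system is a subset of $E_n$ with a unique (hence finite) positive solution whose maximum coordinate is $\left(2^{\textstyle 2^{n-5}}-1\right)^{\textstyle 2^{n-5}}+1$, and then invoking the definition of $\theta(n)$. The numerical verifications for $n\in\{6,7,8,9\}$ are the same routine checks the paper displays.
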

\begin{corollary}\label{cor3}
For every integer \mbox{$n>9$}, the inequality \mbox{$\theta(n)<\left(2^{\textstyle 2^{n-5}}-1\right)^{\textstyle 2^{n-5}}+1$}
implies that \mbox{$2^{\textstyle 2^{n-5}}+1$} is composite.
\end{corollary}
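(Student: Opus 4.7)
The plan is to observe that Corollary \ref{cor3} is simply the contrapositive of Corollary \ref{cor2}, restricted to the range $n > 9$. Corollary \ref{cor2} states that for every integer $n > 5$, the primality of $2^{\textstyle 2^{n-5}}+1$ forces $\theta(n) \geqslant \left(2^{\textstyle 2^{n-5}}-1\right)^{\textstyle 2^{n-5}}+1$. Contraposing this implication immediately yields: if $\theta(n) < \left(2^{\textstyle 2^{n-5}}-1\right)^{\textstyle 2^{n-5}}+1$ for some integer $n > 5$, then $2^{\textstyle 2^{n-5}}+1$ must be composite. Since every integer $n > 9$ also satisfies $n > 5$, the claim follows.

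The only subtlety worth addressing is why the hypothesis is restricted to $n > 9$ rather than the wider range $n > 5$ in which the contrapositive is logically valid. For $n \in \{6,7,8,9\}$ the numbers $2^{\textstyle 2^{n-5}}+1 \in \{5,17,257,65537\}$ are all known Fermat primes, so Corollary \ref{cor2} already establishes $\theta(n) \geqslant \left(2^{\textstyle 2^{n-5}}-1\right)^{\textstyle 2^{n-5}}+1$ unconditionally for these values. Consequently the hypothesis of Corollary \ref{cor3} is false for $n \in \{6,7,8,9\}$, and although the implication still holds vacuously, it carries no content there. Restricting attention to $n > 9$ keeps the statement informative, as the primality status of $2^{\textstyle 2^{n-5}}+1$ is open for every such $n$.

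I do not anticipate any obstacle: the mathematical content resides entirely in Corollary \ref{cor2}, and the present corollary is a one-line logical rearrangement. The conceptual point worth emphasising is that any sufficiently sharp upper bound on $\theta(n)$ would, via this same contrapositive, furnish a primality criterion for the Fermat number $2^{\textstyle 2^{n-5}}+1$; this is precisely what motivates both the exact constants in the conjectured inequality $\theta(n) \leqslant \left(2^{\textstyle 2^{n-5}}-1\right)^{\textstyle 2^{n-5}}+1$ and the threshold $\delta \geqslant 9$ stated in the conjecture.
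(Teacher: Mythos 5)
Your proposal is correct and matches the paper's (implicit) argument exactly: Corollary~\ref{cor3} is the contrapositive of Corollary~\ref{cor2}, noting that for $n>5$ the number $2^{\textstyle 2^{n-5}}+1$ exceeds $1$, so ``not prime'' means ``composite.'' Your side remark about why the range is restricted to $n>9$ is also accurate and consistent with the discussion in Corollary~\ref{cor2}.
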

\vskip 0.01truecm
\noindent
{\bf Conjecture.} (cf. \cite[p.~710]{Tyszka1})
{\em There exists an integer \mbox{$\delta \geqslant 9$} such that the inequality
\[
\theta(n) \leqslant \left(2^{\textstyle 2^{n-5}}-1\right)^{\textstyle 2^{n-5}}+1
\]
holds for every integer \mbox{$n \geqslant \delta$}.}
\begin{corollary}\label{cor4}
By Corollary~\ref{cor1}, the Conjecture implies that there exists a computable function
\mbox{$\xi \colon \N \setminus \{0\} \to \N \setminus \{0\}$} which is greater
than the function \mbox{$\theta \colon \N \setminus \{0\} \to \N \setminus \{0\}$}.
\end{corollary}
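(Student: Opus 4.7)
The plan is to exhibit, under the hypothesis of the Conjecture, an explicit computable function $\xi \colon \N \setminus \{0\} \to \N \setminus \{0\}$ which strictly dominates $\theta$. I would first extract from the Conjecture a fixed integer $\delta \geq 9$ together with the cofinite upper bound $\theta(n) \leq \left(2^{2^{n-5}}-1\right)^{2^{n-5}}+1$ valid for every integer $n \geq \delta$. The right-hand side is a closed-form integer expression in $n$, hence manifestly computable; adding $1$ to it already dominates $\theta$ on the cofinite range $\{n \in \N \setminus \{0\} : n \geq \delta\}$.

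To handle the finite initial segment $\{1,\ldots,\delta-1\}$, I would invoke Corollary~\ref{cor1}: strict monotonicity of $\theta$ gives $\theta(n) \leq \theta(\delta)$ for every $n \leq \delta$, and the Conjecture applied at $n = \delta$ bounds $\theta(\delta)$ by the explicit constant $C := \left(2^{2^{\delta-5}}-1\right)^{2^{\delta-5}}+1$. I would then define $\xi$ piecewise by
\[
\xi(n) := \begin{cases} C+1 & \text{if } 1 \leq n < \delta, \\ \left(2^{2^{n-5}}-1\right)^{2^{n-5}}+2 & \text{if } n \geq \delta. \end{cases}
\]
The threshold $\delta$ and the constant $C$ are fixed positive integers, so they can be hard-coded into any algorithm evaluating $\xi$; the function is therefore computable, and the inequality $\xi(n) > \theta(n)$ holds on each branch by construction.

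The only conceptual subtlety is that the Conjecture is not effective: it asserts the existence of $\delta$ without exhibiting it. I do not regard this as a genuine obstacle, because computability of $\xi$ requires only the existence of some Turing machine computing it, and any specific $\delta$ validating the Conjecture supplies such a machine via the piecewise definition above. Consequently I anticipate no genuinely hard step in the argument; the entire proof is a bookkeeping combination of the Conjecture (for the cofinite tail) and Corollary~\ref{cor1} (for the finite initial segment).
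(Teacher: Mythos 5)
Your proposal is correct and follows exactly the route the paper intends: the Conjecture supplies the explicit bound for $n \geqslant \delta$, Corollary~\ref{cor1} (monotonicity of $\theta$) reduces the initial segment $1 \leqslant n < \delta$ to the single value $\theta(\delta)$, and the non-effectiveness of $\delta$ is irrelevant since computability only requires the existence of some machine with $\delta$ hard-coded. The paper states this corollary without writing out the details, and your write-up fills them in faithfully.
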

\par
Let $\alpha$, $\beta$, and $\gamma$ denote variables.
\begin{lemma}\label{lem1} (\cite[p.~100]{Robinson})
For each positive integers \mbox{$x,y,z$}, \mbox{$x+y=z$} if and only if
\[
(zx+1)(zy+1)=z^2(xy+1)+1
\]
\end{lemma}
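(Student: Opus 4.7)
The plan is to verify the biconditional by a direct algebraic expansion, with the positivity hypothesis used only at one cancellation step. First I would expand the left-hand side universally, without invoking either hypothesis:
\[
(zx+1)(zy+1)=z^2xy+z(x+y)+1.
\]
This single identity is the backbone of both directions, and isolates $x+y$ as the only quantity that interacts with the right-hand side.

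For the forward implication, I would substitute $x+y=z$ directly into this expansion to obtain
\[
(zx+1)(zy+1)=z^2xy+z\cdot z+1=z^2(xy+1)+1,
\]
which is exactly the stated equation. For the reverse implication, I would equate the expansion with the assumed right-hand side $z^2(xy+1)+1=z^2xy+z^2+1$, cancel the common terms $z^2xy$ and $1$ on both sides, and arrive at $z(x+y)=z^2$. Since $z$ is a positive integer, in particular nonzero, I can divide to conclude $x+y=z$.

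The only place where the positive-integer hypothesis is genuinely used is in that final cancellation by $z$; everywhere else the argument is a polynomial identity valid over $\Z$. There is no real obstacle here, since the content of the lemma is a short algebraic manipulation; the main point to emphasize in the write-up is that the equivalence holds because the expanded form of $(zx+1)(zy+1)$ differs from $z^2(xy+1)+1$ exactly by $z(x+y)-z^2$, which vanishes precisely when $x+y=z$.
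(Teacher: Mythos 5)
Your proof is correct and is the standard direct verification; the paper itself gives no proof of this lemma, only a citation to Robinson, so your expansion $(zx+1)(zy+1)=z^2xy+z(x+y)+1$ together with cancellation by $z>0$ supplies exactly the argument being taken for granted. Nothing is missing.
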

\begin{corollary}\label{cor5}
We can express the equation \mbox{$x+y=z$} as an equivalent system ${\cal F}$,
where ${\cal F}$ involves \mbox{$x,y,z$} and $9$ new variables, and where ${\cal F}$ consists of equations
of the forms \mbox{$\alpha+1=\gamma$} and \mbox{$\alpha \cdot \beta=\gamma$}.
\end{corollary}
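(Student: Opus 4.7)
The plan is to translate the identity $(zx+1)(zy+1)=z^2(xy+1)+1$ from Lemma~\ref{lem1} into a chain of equations whose only operations are ``$+1$'' and binary multiplication, by naming every intermediate subexpression with a fresh variable. Counting the subexpressions on the two sides shows that exactly nine auxiliary variables suffice.

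Concretely, I would introduce auxiliary variables $u_1,\ldots,u_9$ and let ${\cal F}$ be the system
\[
\left\{\begin{array}{rclcrcl}
x \cdot y &=& u_1, & \quad & u_1 + 1 &=& u_2, \\
z \cdot x &=& u_3, & \quad & u_3 + 1 &=& u_4, \\
z \cdot y &=& u_5, & \quad & u_5 + 1 &=& u_6, \\
u_4 \cdot u_6 &=& u_7, & \quad & z \cdot z &=& u_8, \\
u_8 \cdot u_2 &=& u_9, & \quad & u_9 + 1 &=& u_7.
\end{array}\right.
\]
Every equation has one of the two required forms $\alpha+1=\gamma$ or $\alpha\cdot\beta=\gamma$, and only nine new variables ($u_1,\ldots,u_9$) have been added to $x,y,z$.

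I would then verify equivalence with $x+y=z$ in two directions. For the forward direction, given positive integers $x,y,z$ with $x+y=z$, define $u_1,\ldots,u_9$ by the left-hand sides above; Lemma~\ref{lem1} then guarantees that the last equation $u_9+1=u_7$ also holds. For the converse, any positive integer solution of ${\cal F}$ forces $u_1=xy$, $u_2=xy+1$, $u_3=zx$, $u_4=zx+1$, $u_5=zy$, $u_6=zy+1$, $u_7=(zx+1)(zy+1)$, $u_8=z^2$, $u_9=z^2(xy+1)$, so the last equation rewrites as $(zx+1)(zy+1)=z^2(xy+1)+1$, and Lemma~\ref{lem1} yields $x+y=z$.

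There is no real obstacle here; the only thing to be careful about is the bookkeeping, namely checking that the count of new variables is exactly $9$ (not $8$ or $10$) and that each equation has one of the two admitted shapes. The only mild subtlety is that the two sides of the Robinson identity share the subterm $xy+1$, so $u_2$ must be reused in the product $u_8\cdot u_2=u_9$; without this sharing the count would exceed nine.
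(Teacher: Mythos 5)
Your proof is correct and follows the paper's approach: both translate Robinson's identity from Lemma~\ref{lem1} into equations of the admitted forms by naming intermediate subexpressions with exactly $9$ fresh variables. The only (immaterial) difference is which top-level term you elide: you name $(zx+1)(zy+1)$ and close the system with $u_9+1=u_7$, whereas the paper names $z^2(xy+1)+1$ and closes it with the product equation $(zx+1)\cdot(zy+1)=z^2(xy+1)+1$; either bookkeeping gives the required count.
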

\begin{proof}
The new $9$ variables express the following polynomials:
\[
zx,~~~~~zx+1,~~~~~zy,~~~~~zy+1,~~~~~z^2,~~~~~xy,~~~~~xy+1,~~~~~~z^2(xy+1),~~~~~z^2(xy+1)+1
\]
\end{proof}
\begin{lemma}\label{lem2}
Let \mbox{$D(x_1,\ldots,x_p) \in {\Z}[x_1,\ldots,x_p]$}.
Assume that \mbox{${\rm deg}(D,x_i) \geqslant 1$} for each \mbox{$i \in \{1,\ldots,p\}$}. We can compute a positive
integer \mbox{$n>p$} and a system \mbox{${\cal T} \subseteq E_n$} which satisfies the following two conditions:
\vskip 0.2truecm
\noindent
{\tt Condition 1.} For every positive integers \mbox{$\tilde{x}_1,\ldots,\tilde{x}_p$},
\[
D(\tilde{x}_1,\ldots,\tilde{x}_p)=0 \Longleftrightarrow
\exists \tilde{x}_{p+1},\ldots,\tilde{x}_n \in \N \setminus \{0\} ~(\tilde{x}_1,\ldots,\tilde{x}_p,\tilde{x}_{p+1},\ldots,\tilde{x}_n) ~solves~ {\cal T}
\]
{\tt Condition 2.} If positive integers \mbox{$\tilde{x}_1,\ldots,\tilde{x}_p$} satisfy
\mbox{$D(\tilde{x}_1,\ldots,\tilde{x}_p)=0$}, then there exists a unique tuple
\mbox{$(\tilde{x}_{p+1},\ldots,\tilde{x}_n) \in (\N \setminus \{0\})^{n-p}$} such that the tuple
\mbox{$(\tilde{x}_1,\ldots,\tilde{x}_p,\tilde{x}_{p+1},\ldots,\tilde{x}_n)$} solves ${\cal T}$.
\vskip 0.2truecm
\noindent
Conditions 1 and 2 imply that the equation \mbox{$D(x_1,\ldots,x_p)=0$} and the system ${\cal T}$ have
the same number of solutions in positive integers.
\end{lemma}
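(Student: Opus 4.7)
The plan is a purely syntactic translation of $D=0$ into a circuit over the two primitive operations $\alpha\mapsto\alpha+1$ and $(\alpha,\beta)\mapsto\alpha\cdot\beta$, attaching one fresh variable to every intermediate value. First I would preprocess $D$: split its monomials according to the sign of their coefficients to write $D=L-R$ with $L,R\in\N[x_1,\ldots,x_p]$, then choose a positive integer constant $c$ so that both $L+c$ and $R+c$ are non-zero polynomials with non-negative integer coefficients taking positive values on $(\N\setminus\{0\})^{p}$. The equation $D(\tilde x)=0$ is then equivalent to $(L+c)(\tilde x)=(R+c)(\tilde x)$, with both sides positive integers.

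Next I would realize $L+c$ and $R+c$ as arithmetic circuits built from the equations allowed in $E_n$. The building blocks are: (a) a single equation $x_k\cdot x_k=x_k$ to pin a designated variable to the constant $1$; (b) chains $x_{k}+1=x_{k+1}$ to build each positive integer coefficient appearing in $L+c$ or $R+c$; (c) sequences of multiplications $x_i\cdot x_j=x_k$ to build each monomial $x_1^{a_1}\cdots x_p^{a_p}$ and then to multiply it by the variable carrying its coefficient; (d) Corollary \ref{cor5}, applied iteratively, to add two summands at a time, culminating in one variable $u$ that carries the value of $L+c$ and one variable $v$ that carries the value of $R+c$. Because Corollary \ref{cor5} supplies a subsystem whose only solution in positive integers forces the designated output variable to equal $\alpha+\beta$, and because each multiplication or $+1$ step is itself deterministic, every auxiliary variable takes a value that is a forced polynomial function of $\tilde x_1,\ldots,\tilde x_p$.

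Finally I would couple $u$ and $v$ by introducing one further variable $w$ together with the two equations $u+1=w$ and $v+1=w$, which in positive integers is equivalent to $u=v$, hence to $D(\tilde x)=0$. The resulting system $\mathcal T\subseteq E_n$ satisfies Condition 1 by the chain of equivalences above, and Condition 2 because each auxiliary variable (including $w$) is introduced as the unique output of a defining equation whose inputs are already determined by $\tilde x_1,\ldots,\tilde x_p$; uniqueness propagates through every block, with the terminal block supplying $w=u+1$ once $u=v$ is attained. I expect the main obstacle to be pure bookkeeping: one must enumerate the fresh variables so that no unintended identification between indices occurs, verify that every equation produced falls into one of the two forms allowed in $E_n$, and check that each Robinson-style subblock introduced in step (d) cleanly contributes its $9$ extra variables without clashing with earlier ones. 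All the genuine mathematical content is already packaged in Lemma \ref{lem1} and Corollary \ref{cor5}.
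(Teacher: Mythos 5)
Your proposal is correct and follows essentially the same route as the paper: rewrite $D=0$ as an equality of two polynomial expressions with non-negative integer coefficients taking positive values on positive integers (the paper uses $D+\widetilde{D}+1=\widetilde{D}+1$ where $\widetilde{D}$ replaces each coefficient by the successor of its absolute value), then encode both sides with equations of the forms $\alpha+1=\gamma$ and $\alpha\cdot\beta=\gamma$, invoking Corollary~\ref{cor5} for each addition. The paper's proof is merely a terser sketch of the same construction; your extra bookkeeping (pinning the constant $1$, coefficient chains, the final coupling $u+1=w$, $v+1=w$) is a sound way to fill in the details it omits.
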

\begin{proof}
We write down the polynomial \mbox{$D(x_1,\ldots,x_p)$} and replace each coefficient by the successor
of its absolute value. Let \mbox{$\widetilde{D}(x_1,\ldots,x_p)$} denote the obtained polynomial.
The polynomials \mbox{$D(x_1,\ldots,x_p)+\widetilde{D}(x_1,\ldots,x_p)$} and \mbox{$\widetilde{D}(x_1,\ldots,x_p)$}
have positive integer coefficients. The equation \mbox{$D(x_1,\ldots,x_p)=0$} is equivalent to
\[
D(x_1,\ldots,x_p)+\widetilde{D}(x_1,\ldots,x_p)+1=\widetilde{D}(x_1,\ldots,x_p)+1
\]
There exist positive integers $a$ and $b$ and finite \mbox{non-empty} lists $A$ and $B$ such that
the above equation is equivalent to
\[
\Bigl(\Bigl(\Bigl(\sum_{\textstyle (i_1,j_1,\ldots,i_k,j_k) \in A}
x_{\textstyle i_1}^{\textstyle j_1} \cdot \ldots \cdot x_{\textstyle i_k}^{\textstyle j_k}\Bigr)+
\underbrace{1\Bigr)+\ldots\Bigr)+1}_{\textstyle a~{\rm units}}=
\]
\[
\Bigl(\Bigl(\Bigl(\sum_{\textstyle (i_1,j_1,\ldots,i_k,j_k) \in B}
x_{\textstyle i_1}^{\textstyle j_1} \cdot \ldots \cdot x_{\textstyle i_k}^{\textstyle j_k}\Bigr)+
\underbrace{1\Bigr)+\ldots\Bigr)+1}_{\textstyle b~{\rm units}}
\]
and all the numbers \mbox{$k,i_1,j_1,\ldots,i_k,j_k$} belong to \mbox{$\N \setminus \{0\}$}.
Next, we apply Corollary~\ref{cor5}.
\end{proof}
\begin{theorem}\label{the4}
The Conjecture implies that there exists
an algorithm which takes as input a Diophantine equation \mbox{$D(x_1,\ldots,x_p)=0$} and returns
the message {\tt "Yes"} or {\tt "No"} which correctly determines the solvability of the equation \mbox{$D(x_1,\ldots,x_p)=0$}
in positive integers, if the solution set is finite.
\end{theorem}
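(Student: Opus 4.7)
The plan is to reduce the input equation to a system in $E_n$ using Lemma~\ref{lem2}, invoke Corollary~\ref{cor4} to obtain a computable upper bound on $\theta$, and then decide solvability by exhaustive search over a bounded region whose size is computable from $n$.

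First, I would dispose of a degeneracy in the hypothesis of Lemma~\ref{lem2}. If ${\rm deg}(D, x_i) = 0$ for some $i$, then $D$ does not depend on $x_i$, so the positive-integer solution set of $D = 0$ is either empty or infinite. Under the standing assumption that this solution set is finite it must then be empty, and returning \texttt{"No"} is correct. Otherwise, ${\rm deg}(D, x_i) \geqslant 1$ for each $i$ and Lemma~\ref{lem2} applies: we compute an integer $n > p$ and a system $\mathcal{T} \subseteq E_n$ such that the positive-integer solution sets of $D(x_1,\ldots,x_p) = 0$ and of $\mathcal{T}$ are in bijection (via Conditions~1 and~2), and in particular have the same cardinality. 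Hence $\mathcal{T}$ has only finitely many positive-integer solutions whenever $D = 0$ does.

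By Corollary~\ref{cor4}, the Conjecture supplies a computable function $\xi \colon \N \setminus \{0\} \to \N \setminus \{0\}$ with $\xi > \theta$ pointwise. The algorithm then computes $\xi(n)$, enumerates all tuples in $([1, \xi(n)] \cap \N)^n$, tests each against the finitely many equations of $\mathcal{T}$, and outputs \texttt{"Yes"} if some tuple satisfies every equation of $\mathcal{T}$ and \texttt{"No"} otherwise. For correctness in the finite-solution case: if $D = 0$ has at least one positive-integer solution, then $\mathcal{T}$ has at least one and only finitely many, so by the definition of $\theta(n)$ a solution of $\mathcal{T}$ lies in $([1, \theta(n)] \cap \N)^n \subseteq ([1, \xi(n)] \cap \N)^n$ and is located by the search; if $D = 0$ has no solution, then $\mathcal{T}$ has none and the search returns \texttt{"No"}. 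Termination is immediate since $\xi(n)$ is computed and the search region is finite.

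The would-be obstacle is conceptual rather than technical: one must notice that $\theta$ itself need not be computable for this argument to go through---only a computable majorant of $\theta$ is required, and that is precisely what Corollary~\ref{cor4} extracts from the Conjecture. Once this is in place, both the encoding step supplied by Lemma~\ref{lem2} and the handling of the variable-absent degeneracy are routine.
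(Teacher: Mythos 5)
Your proposal is correct and follows essentially the same route as the paper: reduce $D=0$ to a system $\mathcal{T}\subseteq E_n$ via Lemma~\ref{lem2}, then exhaustively search up to a computable bound that the Conjecture (together with the monotonicity in Corollary~\ref{cor1}) guarantees exceeds $\theta(n)$ --- the paper simply writes the bound $\bigl(2^{\textstyle 2^{w-5}}-1\bigr)^{\textstyle 2^{w-5}}+1$ with $w=\max(n,\delta)$ explicitly instead of routing it through Corollary~\ref{cor4}. Your explicit treatment of the case ${\rm deg}(D,x_i)=0$ is a small point of care that the paper's proof leaves implicit.
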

\begin{proof}
We apply Lemma~\ref{lem2} and compute the system \mbox{${\cal T} \subseteq E_n$}, where \mbox{$n>p$}.
Let \mbox{$w={\rm max}(n,\delta)$}. By Corollary~\ref{cor1}, it suffices to check whether or not the system ${\cal T}$
has a solution in positive integers
 \mbox{$x_1,\ldots,x_n$} not greater than
\mbox{$\left(2^{\textstyle 2^{w-5}}-1\right)^{\textstyle 2^{w-5}}+1$}.
\end{proof}
\par
The Davis-Putnam-Robinson-Matiyasevich theorem states that every recursively
enumerable set \mbox{${\cal M} \subseteq {\N}^n$} has a Diophantine
representation, that is
\[
(a_1,\ldots,a_n) \in {\cal M} \Longleftrightarrow \exists x_1, \ldots, x_m \in \N ~~W(a_1,\ldots,a_n,x_1,\ldots,x_m)=0 \tag*{\texttt{(R)}}
\]
for some polynomial $W$ with integer coefficients, see \cite{Matiyasevich1}.
The polynomial~$W$ can be computed, if we know the Turing \mbox{machine $M$} such
that, for all \mbox{$(a_1,\ldots,a_n) \in {\N}^n$}, $M$ halts on \mbox{$(a_1,\ldots,a_n)$} if
and only if \mbox{$(a_1,\ldots,a_n) \in {\cal M}$}, \mbox{see \cite{Matiyasevich1}}.
The representation~\texttt{(R)} is said to be \mbox{single-fold}, if for every
\mbox{$a_1,\ldots,a_n \in \N$} the equation
\mbox{$W(a_1,\ldots,a_n,x_1,\ldots,x_m)=0$} has at most one solution
\mbox{$(x_1,\ldots,x_m) \in {\N}^m$}.
The representation~\texttt{(R)} is said to be \mbox{finite-fold}, if for every
\mbox{$a_1,\ldots,a_n \in \N$} the equation
\mbox{$W(a_1,\ldots,a_n,x_1,\ldots,x_m)=0$} has only finitely many solutions
\mbox{$(x_1,\ldots,x_m) \in {\N}^m$}. \mbox{Yu. Matiyasevich} conjectured that
each recursively enumerable set \mbox{${\cal M} \subseteq {\N}^n$} has a
\mbox{single-fold} (\mbox{finite-fold}) Diophantine representation, see \mbox{\cite[pp.~341--342]{DMR}} and
\mbox{\cite[p.~42]{Matiyasevich2}}. Currently, he seems agnostic on his conjectures,
see \mbox{\cite[p.~749]{Matiyasevich3}}.
In \mbox{\cite[p.~581]{Tyszka2}}, the author explains why Matiyasevich's conjectures although widely known are less widely accepted.
\begin{theorem}\label{the5}
If a function \mbox{$f \colon \N \setminus \{0\} \to \N \setminus \{0\}$}
has a \mbox{finite-fold} Diophantine representation,
then there exists a positive integer $m$ such that \mbox{$f(n)<\theta(n)$} for every integer \mbox{$n>m$}.
\end{theorem}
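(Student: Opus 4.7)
The plan is to take a finite-fold Diophantine representation of the graph of $f$, convert its defining equation into a subsystem of some $E_{N_0}$ via Lemma~\ref{lem2}, and for each input $n$ append a small ``encoding'' subsystem that forces one of the variables to take the value $n$. If the resulting system has $N_0+O(\log n)$ variables, then for $n$ sufficiently large this count is at most $n-1$, and the strict monotonicity of $\theta$ from Corollary~\ref{cor1} upgrades the inequality $f(n)\leqslant\theta(N_0+O(\log n))$ to the desired $f(n)<\theta(n)$.

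In detail, I would first fix a finite-fold Diophantine representation $W(a,b,y_1,\ldots,y_k)=0$ of $f$ over positive integers; a routine substitution $x_i\mapsto y_i-1$ reduces the standard $\N$-valued representation to this form and preserves finite-foldness. Because $f$ is a function, $f(n)$ is the unique $b$ for which $W(n,b,y_1,\ldots,y_k)=0$ is solvable in positive integers, and finite-foldness leaves only finitely many such $(y_1,\ldots,y_k)$. Applying Lemma~\ref{lem2} to $W$ produces a fixed system $\mathcal{T}_0\subseteq E_{N_0}$, with $N_0$ depending only on $W$, whose positive-integer solutions biject with those of $W=0$ and two of whose variables play the roles of $a$ and $b$. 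Then, for each $n$, I would build an encoding $\mathcal{E}_n$: pin one variable to $1$ by the equation $v\cdot v=v$, pin another to $2$ by $v+1=w$, and then run Horner's scheme on the binary digits of $n$, at each step multiplying the running value by $w$ (an $x_i\cdot x_j=x_k$ equation) and conditionally adding $1$ (an $x_i+1=x_k$ equation). This yields a subsystem with a unique positive-integer solution, in at most $2\lceil\log_2 n\rceil+O(1)$ variables, in which a distinguished variable equals $n$.

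Gluing $\mathcal{E}_n$ to $\mathcal{T}_0$ by identifying the distinguished variable with the $a$-variable gives a system $\mathcal{S}_n\subseteq E_{N(n)}$ with $N(n)=N_0+O(\log n)$. Since $\mathcal{E}_n$ has a unique solution forcing $a=n$ and the $\mathcal{T}_0$-part then admits exactly the finitely many completions corresponding to solutions of $W(n,b,y_1,\ldots,y_k)=0$, the system $\mathcal{S}_n$ has at least one and at most finitely many positive-integer solutions, and in every such solution the $b$-variable equals $f(n)$. Choose a positive integer $M$ so large that $N(n)\leqslant n-1$ whenever $n>M$; this is possible because $N_0$ is a fixed constant and the encoding contributes only $O(\log n)$. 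For such $n$, the definition of $\theta(N(n))$ supplies a positive-integer solution of $\mathcal{S}_n$ with all coordinates in $[1,\theta(N(n))]$; the $b$-coordinate of that solution is $f(n)$, hence $f(n)\leqslant\theta(N(n))\leqslant\theta(n-1)<\theta(n)$ by Corollary~\ref{cor1}.

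The central obstacle is the encoding step: $\mathcal{E}_n$ must use only $O(\log n)$ variables \emph{and} possess a unique positive-integer solution, so that merging with $\mathcal{T}_0$ preserves finite-foldness while keeping the total variable count below $n$. The remaining ingredients---shifting the Diophantine representation to positive integers, invoking Lemma~\ref{lem2}, and using the strict monotonicity of $\theta$---are routine.
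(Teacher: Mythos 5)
Your proposal is correct, and its skeleton coincides with the paper's: convert the finite-fold representation of $f$ into a system of equations of the forms $\alpha+1=\gamma$ and $\alpha\cdot\beta=\gamma$ via Lemma~\ref{lem2}, adjoin a uniquely solvable subsystem that pins a variable to the value $n$, observe that the resulting system is solvable with only finitely many solutions, and invoke the definition of $\theta$. The two arguments differ in how $n$ is encoded and in how the variable count is reconciled with $n$. The paper encodes $n$ essentially in unary: a chain $t_1=1$, $t_i+1=t_{i+1}$ of length $\left[\frac{n}{2}\right]$, followed by one multiplication by $2$ and a parity correction, which costs about $n/2$ variables; idempotent dummies $u_i\cdot u_i=u_i$ then pad the system to exactly $n$ variables, so $\theta(n)$ applies directly, and strictness comes from one extra equation $x_2+1=y$ giving $f(n)<y\leqslant\theta(n)$. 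You instead encode $n$ in binary via Horner's scheme in $O(\log n)$ variables, land at some $N(n)\leqslant n-1$ for large $n$, and import strictness from the strict monotonicity $\theta(N(n))\leqslant\theta(n-1)<\theta(n)$ of Corollary~\ref{cor1}. Your route yields a smaller threshold $m$ (roughly $N_0+O(\log N_0)$ rather than $2s+2$) at the cost of an extra appeal to Corollary~\ref{cor1}; the paper's route avoids that appeal by making the variable count exactly $n$. Both are sound; the only point you should make fully explicit is that every variable of your gadget $\mathcal{E}_n$ is forced to a unique value (which your construction does ensure, since $v$ and $w$ are pinned to $1$ and $2$ and each Horner variable is determined by its predecessors), so that gluing $\mathcal{E}_n$ to $\mathcal{T}_0$ preserves the finiteness of the solution set.
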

\begin{proof}
There exists a polynomial \mbox{$W(x_1,x_2,x_3,\ldots,x_r)$} with integer coefficients
such that for each positive integers \mbox{$x_1,x_2$},
\[
(x_1,x_2) \in f \Longleftrightarrow \exists x_3,\ldots,x_r \in \N \setminus \{0\}~~ W(x_1,x_2,x_3-1,\ldots,x_r-1)=0
\]
and for each positive integers \mbox{$x_1,x_2$} at most finitely many tuples
\mbox{$(x_3,\ldots,x_r)$} of positive integers satisfy \mbox{$W(x_1,x_2,x_3-1,\ldots,x_r-1)=0$}.
By Lemma~\ref{lem2}, there exists an integer \mbox{$s \geqslant 3$} such that
for every positive integers \mbox{$x_1,x_2$},
\begin{equation}
(x_1,x_2) \in f \Longleftrightarrow \exists x_3,\ldots,x_s \in \N \setminus \{0\}~~ \Phi(x_1,x_2,x_3,\ldots,x_s)\tag*{\texttt{(E)}}
\end{equation}
where \mbox{$\Phi(x_1,x_2,x_3,\ldots,x_s)$} is a conjunction of formulae of the forms
\mbox{$x_i+1=x_k$} and \mbox{$x_i \cdot x_j=x_k$}, the indices $i,j,k$ belong to
$\{1,\ldots,s\}$, and for each positive integers \mbox{$x_1,x_2$} at most finitely many
tuples \mbox{$(x_3,\ldots,x_s)$} of positive integers satisfy \mbox{$\Phi(x_1,x_2,x_3,\ldots,x_s)$}.
Let $[\cdot]$ denote the integer part function, and let an integer $n$ be greater than \mbox{$m=2s+2$}.
Then,
\[
n \geqslant \left[\frac{n}{2}\right]+\frac{n}{2}>\left[\frac{n}{2}\right]+s+1
\]
and \mbox{$n-\left[\frac{n}{2}\right]-s-2 \geqslant 0$}.
Let $T_n$ denote the following system with $n$ variables:
\[
\left\{
\begin{array}{c}
\textrm{all~equations~occurring~in~}\Phi(x_1,x_2,x_3,\ldots,x_s)\\
\begin{array}{rcl}
\forall i \in \left\{1,\ldots,n-\left[\frac{n}{2}\right]-s-2\right\} ~u_i \cdot u_i &=& u_i\\
t_1 \cdot t_1 &=& t_1\\
\forall i \in \left\{1,\ldots,\left[\frac{n}{2}\right]-1\right\} ~t_i+1 &=& t_{i+1}\\
t_2 \cdot t_{\left[\frac{n}{2}\right]} &=& u\\
u+1 &=&x_1 {\rm ~(if~}n{\rm ~is~odd)}\\
t_1 \cdot u &=& x_1 {\rm ~(if~}n{\rm ~is~even)}\\
x_2+1 &=& y
\end{array}
\end{array}
\right.
\]
By the equivalence~\texttt{(E)}, the system \mbox{$T_n$} is solvable in positive integers,
\mbox{$2 \cdot \left[\frac{n}{2}\right]=u$}, \mbox{$n=x_1$}, and
\[
f(n)=f(x_1)=x_2<x_2+1=y
\]
The system $T_n$ consists of equations of the forms \mbox{$\alpha+1=\gamma$} and \mbox{$\alpha \cdot \beta=\gamma$}.
Since \mbox{$T_n$} has only finitely many solutions in positive integers, \mbox{$y \leqslant \theta(n)$}.
Hence, \mbox{$f(n)<\theta(n)$}.
\end{proof}
\begin{corollary}\label{cor6}
The Conjecture contradicts Matiyasevich's conjecture on \mbox{finite-fold} Diophantine representations.
\end{corollary}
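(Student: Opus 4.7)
The plan is a straightforward diagonalization: I will combine the explicit computable upper bound on $\theta$ given by the Conjecture with the growth lower bound on $\theta$ produced by Theorem~\ref{the5} to force a contradiction, by inserting between them a computable function that is chosen to defeat the upper bound.

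First, I will assume for contradiction that both the Conjecture and Matiyasevich's conjecture on \mbox{finite-fold} Diophantine representations hold. Under the Conjecture, there is an integer $\delta \geqslant 9$ such that
\[
\theta(n) \leqslant \left(2^{\textstyle 2^{n-5}}-1\right)^{\textstyle 2^{n-5}}+1
\]
for every integer $n \geqslant \delta$. The right-hand side, viewed as a function of $n$, is obviously total computable, so I will define an explicitly computable function \mbox{$g \colon \N \setminus \{0\} \to \N \setminus \{0\}$} that strictly dominates it, for instance \mbox{$g(n)=\left(2^{\textstyle 2^{n-5}}-1\right)^{\textstyle 2^{n-5}}+2$} for \mbox{$n \geqslant 6$} (with any convenient positive values for smaller $n$).

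Next, since $g$ is total computable, its graph is a recursively enumerable subset of \mbox{$(\N \setminus \{0\})^2$}. By Matiyasevich's conjecture on \mbox{finite-fold} Diophantine representations, $g$ admits such a representation in the sense required by Theorem~\ref{the5}. Applying Theorem~\ref{the5}, I obtain a positive integer $m$ with \mbox{$g(n)<\theta(n)$} for every integer \mbox{$n>m$}. Combined with the Conjecture's upper bound, this yields, for every integer \mbox{$n > \max(m,\delta,5)$},
\[
\left(2^{\textstyle 2^{n-5}}-1\right)^{\textstyle 2^{n-5}}+2 = g(n) < \theta(n) \leqslant \left(2^{\textstyle 2^{n-5}}-1\right)^{\textstyle 2^{n-5}}+1,
\]
an outright contradiction.

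I do not anticipate any genuine obstacle: Theorem~\ref{the5} already does the real work by producing, from Matiyasevich's conjecture, a \emph{lower} bound on $\theta$ that outgrows every \mbox{finite-fold} Diophantine function, while the Conjecture supplies a matching \emph{upper} bound; the only thing to check is that the chosen $g$ is both computable and eventually larger than the Conjecture's upper bound, which is immediate from the formula. The most delicate point worth stating carefully is simply the passage from ``$g$ is total computable'' to ``$g$ has a \mbox{finite-fold} Diophantine representation'', which is exactly the content of Matiyasevich's conjecture being assumed.
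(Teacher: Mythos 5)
Your proposal is correct and is exactly the argument the paper intends (the paper states Corollary~\ref{cor6} without proof, leaving implicit precisely this diagonalization: the Conjecture gives a computable upper bound on $\theta$, Matiyasevich's conjecture plus Theorem~\ref{the5} would force $\theta$ to eventually exceed any computable function dominating that bound). No gaps; the passage from total computability of $g$ to the recursive enumerability of its graph, and hence to a finite-fold representation under Matiyasevich's conjecture, is handled correctly.
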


\noindent
Apoloniusz Tyszka\\
Technical Faculty\\
Hugo Ko{\l}{\l}\c{a}taj University\\
Balicka 116B, 30-149 Krak\'ow, Poland\\
E-mail: \url{rttyszka@cyf-kr.edu.pl}
\end{sloppypar}
\end{document}